\documentclass[12pt]{amsart}
\voffset=-0.05\textheight \textheight=1.1\textheight
\hoffset=-0.1\textwidth \textwidth=1.2\textwidth

\usepackage{amssymb}
\usepackage{amsmath}
\usepackage[hidelinks]{hyperref}
\usepackage{etoolbox}
\usepackage{enumitem}
\usepackage{xcolor}
\usepackage{tikz-cd}
\usepackage[normalem]{ulem}
\hypersetup{
	colorlinks,
	linkcolor={red!30!black},
	citecolor={blue!50!black},
	urlcolor={blue!80!black}
}

\newtheorem{theorem}{Theorem}%[section]

\newtheorem{proposition}[theorem]{Proposition}

\newtheorem{definition}[theorem]{Definition}
\newtheorem{corollary}[theorem]{Corollary}

\theoremstyle{remark}
\newtheorem{remark}[theorem]{Remark}

\newcommand{\p}{\vskip .4cm}

\newcommand{\Z}{\mathbb{Z}}
\newcommand{\Q}{\mathbb{Q}}
\newcommand{\R}{\mathbb{R}}
\newcommand{\C}{\mathbb{C}}
\newcommand{\Cc}{C^{\times}}

\newcommand{\cB}{\mathcal{B}}

\newcommand{\cO}{\mathcal{O}}

\newcommand{\bG}{\mathbb{G}}

\newcommand{\til}{\tilde}
\newcommand{\bsl}{\backslash}
\newcommand{\ra}{\rightarrow}

\newcommand{\sra}{\twoheadrightarrow}

\newcommand{\Lie}{\operatorname{Lie}}
\newcommand{\rank}{\operatorname{rank}}
\newcommand{\depth}{\operatorname{depth}}
\newcommand{\Ad}{\operatorname{Ad}}

\newcommand{\Irr}{\operatorname{Irr}}

\newcommand{\fg}{\mathfrak{g}}

\newcommand{\fm}{\mathfrak{m}}

\newcommand{\slt}{\mathfrak{sl}_2}

\newcommand{\Hom}{\operatorname{Hom}}

\newcommand{\im}{\operatorname{im}}

\newcommand{\SL}{\mathrm{SL}}

\newcommand{\dMP}{\operatorname{dMP}}
\newcommand{\DM}{\operatorname{DM}}
\newcommand{\DI}{\operatorname{DI}}
\newcommand{\DS}{\operatorname{DS}}

\newcommand{\uDM}{\underline{\operatorname{DM}}}

\newcommand{\nil}{\fg^{\mathrm{nil}}/\!\!\sim}
\newcommand{\nild}{(\fg^*)^{\mathrm{nil}}/\!\!\sim}

\begin{document}

\title{Local character expansion for mod-$\ell$ representations}

\author{Cheng-Chiang Tsai}
\thanks{The author is supported by Taiwan NSTC grants 114-2115-M-001-009 and 114-2628-M-001-003.}
\address{Institute of Mathematics, Academia Sinica, 6F, Astronomy-Mathematics Building, No. 1,
Sec. 4, Roosevelt Road, Taipei, Taiwan \vskip.2cm
also Department of Applied Mathematics, National Sun Yat-Sen University, and Department of Mathematics, National Taiwan University}

\email{chchtsai@gate.sinica.edu.tw}

\begin{abstract} 
    Let $G$ be a $p$-adic reductive group with $p$ ``very large.'' For any irreducible admissible representation $\pi$ of $G$ over an algebraically closed field $C$ of characteristic $\not=p$, we define a ``local character expansion'' of $\pi$ with coefficients $c_{\mathcal{O}}(\pi)\in\Q$, that does not use the character of $\pi$ directly but instead use the multiplicities of degenerate Moy-Prasad types. The existence of such local character expansion for mod-$\ell$ representations is shown in another paper of the author using a different and quicker method.
\end{abstract}

\makeatletter
\let\@wraptoccontribs\wraptoccontribs
\makeatother

\maketitle

\tableofcontents

\section{Introduction}

Let $F$ be a non-archimedean local field and $k$ its residue field. Let $\bG$ be a connected reductive group over $F$, with the assumption that $p=\mathrm{char}(k)$ is very large compared to $\bG$; see \S\ref{sec:conv} for detail. Write $G=\bG(F)$, $\fg=(\Lie\bG)(F)$ and $\fg^*:=(\Lie^*\bG)(F)$. Consider an algebraically closed field $C$ of characteristic $\ell\not=p$, possibly $\ell=0$. Take an additive character $\til{\psi}:(F,+)\ra\Cc$ that is trivial on $\fm_F$ but non-trivial on $\cO_F$, so that it restricts to an additive character $\psi:(k,+)\ra\Cc$. 

Denote by $\nild$ the set of nilpotent $G$-orbits in $\fg^*$. When $\mathrm{char}(F)=0$ and $C=\C$, the famous Harish-Chandra--Howe local character expansion \cite[Theorem 16.2]{HC} asserts that for any finite-length admissible $\C$-representation $\pi$, its character $\Theta_{\pi}$ enjoys an expansion on some neighborhood $U_{\pi}$ of $\mathrm{id}\in G$: there exist constants $\til{c}_{\cO}(\pi,\til{\psi})\in\C$ for $\cO\in\nild$ such that for every $\C$-valued smooth compactly supported function $f$ supported on $U_{\pi}$ we have
\begin{equation}\label{eq:LCE}
\Theta_{\pi}(f)=\sum_{\cO\in(\fg^*)^{\mathrm{nil}}/\sim}\til{c}_{\cO}(\pi,\til{\psi})\cdot\mu_{\cO}(\widehat{f\circ\exp})
\end{equation}
where 
\[
\widehat{f\circ\exp}(X):=\int_{Y\in\fg}\til{\psi}(\langle X,Y\rangle) f(\exp(y)).
\]
and $\mu_{\cO}$ is integration on $\cO$, both to be normalized in \S\ref{sec:conv}.

Meanwhile, Moy and Prasad defined certain types using the Moy-Prasad filtration: For a point $x\in\cB(G)$ in the reduced Bruhat-Tits building, they defined open compact subgroups $G_{x\ge s}\subset G$ for $s\in\R_{\ge 0}$ as well as lattices $\fg_{x\ge s}\subset\fg$, $\fg^*_{x\ge s}\subset\fg^*$ for $s\in\R$. We also write $G_{x=s}$ etc. for the quotient in the filtration. See \S\ref{sec:conv} for a review of these notations as well as an explanation for the convention. For $s>0$, given any $\phi\in\fg_{x=-s}^*$ we can construct
\[
\def\arraystretch{1.2}
\begin{array}{ccccccccl}
\psi_{\phi}&:&G_{x\ge s}&\sra& G_{x=s}&\cong&\Hom_k(\fg^*_{x=-s},\,k)&\ra&\Cc\\
&&&&&&\lambda&\mapsto &\psi(\lambda(\phi))
\end{array}
\]
Here the middle isomorphism follows from \cite[Theorem 13.5.1(1)]{KP23}. We say $\phi$ and $\psi_{\phi}$ are {\bf degenerate} if $\phi+\fg^*_{x>-s}$ contains a nilpotent element in $\fg^*$. We say $\pi$ contains $\psi_{\phi}$ whenever $\Hom_{G_{x\ge s}}(\psi_\phi,\pi)\not=0$. It follows from \cite[Theorem 5.2 and \S7.1]{MP94}\footnote{It is stated only for $\bG$ absolutely quasi-simple simply connected, but the proof works in general as long as $\bG$ is tamely ramified, which follows from our assumption on $p$.} when $\mathrm{char}(C)=0$ and from \cite[Th\'{e}or\'{e}me II.5.7.a]{Vig96} when $\mathrm{char}(C)\not=p$ that for any finite-length admissible $C$-representation $\pi$ there is a quantity $\depth(\pi)\in\Q_{\ge 0}$ such that when $s>\depth(\pi)$, any $\psi_{\phi}$ constructed above contained by $\pi$ is degenerate. We consider 
\[\dMP(s):=\{(x,\phi)\;|\;x\in\cB(G),\;\phi\in\fg^*_{x=-s}\text{ is degenerate}\},\]
For $\Omega\subset\fg^*$ a compact open subset we denote by $[\Omega]$ the function that takes value $1$ on $\Omega$ and $0$ elsewhere. Our main theorem is

\begin{theorem}\label{thm:main} Let $F$, $\bG$ and $G$ be as before. Then for any $C$, $\psi$ as above and any finite-length admissible $C$-representation $\pi$, there exist constants $c_{\cO}(\pi,\psi)\in\Q$ for $\cO\in\nild$ satisfying
\begin{equation}\label{eq:LCE2}
\dim_C\Hom_{G_{x\ge s}}(\psi_\phi,\pi)=\sum_{\cO\in(\fg^*)^{\mathrm{nil}}/\sim}\mu_{\cO}([\phi+\fg^*_{x>-s}])\cdot c_{\cO}(\pi,\psi)
\end{equation}
for any $s>\depth(\pi)$ and $(x,\phi)\in\dMP(s)$.

Equation \eqref{eq:LCE2} uniquely determines $c_{\cO}(\pi,\psi)$. Moreover, when $\mathrm{char}(F)=0$ and $C=\C$, we have $c_{\cO}(\pi,\psi)=\til{c}_{\cO}(\pi,\til{\psi})$ where the latter is as in \eqref{eq:LCE}, normalized as in \cite[I.8]{MW87}.
\end{theorem}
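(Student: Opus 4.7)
I would approach the theorem by separating three ingredients: uniqueness and rationality, existence over $\C$ via Harish-Chandra--Howe, and passage to general $C$ via lifting. For uniqueness, suppose $\{d_\cO\}_{\cO\in\nild}$ satisfies $\sum_\cO d_\cO\,\mu_\cO([\phi+\fg^*_{x>-s}])=0$ for all $(x,\phi)\in\dMP(s)$ and all $s>\depth(\pi)$. The key input is the standard homogeneity of nilpotent orbital integrals: replacing $\phi$ by $\varpi^{-1}\phi$ (equivalently, incrementing $s$) multiplies $\mu_\cO([\phi+\fg^*_{x>-s}])$ by a power of $q$ depending on $\dim\cO$, so that different-dimensional contributions can be isolated by taking suitable linear combinations across different $s$. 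Within a fixed dimension, orbits are separated by localizing $\phi$ near the regular locus of each $\cO$ (using that $\mu_{\cO'}$ vanishes near generic points of $\cO$ unless $\cO\subseteq\overline{\cO'}$). Iterating forces $d_\cO=0$. As a byproduct, each $c_\cO(\pi,\psi)$ is a specific $\Q$-linear combination of integer multiplicities $\dim_C\Hom_{G_{x\ge s}}(\psi_\phi,\pi)$, making rationality automatic once existence is established.

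For existence when $\mathrm{char}(F)=0$ and $C=\C$, I would apply \eqref{eq:LCE} to the test function
\[
f_{x,s,\phi}(g) := \frac{1}{\mathrm{vol}(G_{x\ge s})}\,\mathbf{1}_{G_{x\ge s}}(g)\,\psi_\phi(g)^{-1}.
\]
Since $\psi_\phi$ is one-dimensional, $\Theta_\pi(f_{x,s,\phi})$ equals $\dim_\C\Hom_{G_{x\ge s}}(\psi_\phi,\pi)$. Using that $\exp:\fg_{x\ge s}\to G_{x\ge s}$ is a measure-preserving bijection for $s>0$ and that the pairing of \cite[Theorem 13.5.1(1)]{KP23} provides self-duality of the Moy-Prasad filtration, a short Fourier computation gives $\widehat{f_{x,s,\phi}\circ\exp} = [\phi+\fg^*_{x>-s}]$. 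The condition $s>\depth(\pi)$ places $G_{x\ge s}$ inside a neighbourhood of the identity on which the HCH expansion is valid, so \eqref{eq:LCE} applies and delivers \eqref{eq:LCE2} with $c_\cO(\pi,\psi)=\til{c}_\cO(\pi,\til\psi)$ in the normalization of \cite{MW87}.

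For general $C$, where the character is unavailable, I would define $c_\cO(\pi,\psi)\in\Q$ directly by inverting an invertible $\Q$-linear subsystem of \eqref{eq:LCE2} restricted to finitely many $(x,\phi)$ (producing rational values automatically), and verify \eqref{eq:LCE2} at the remaining $(x,\phi,s)$ by a lifting argument: using integral structures on Moy-Prasad data (valid in very large residue characteristic), the packet of multiplicities $\dim_C\Hom_{G_{x\ge s}}(\psi_\phi,\pi)$ at each bounded depth is realized by some virtual $\Ql$-admissible representation $\til\pi$ of the same depth, and applying the complex case to $\til\pi$ transfers \eqref{eq:LCE2} back to $\pi$ via uniqueness. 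The main obstacle --- and the bulk of the technical work --- is making this lifting precise: one needs a statement about decomposition numbers for the pro-$p$-Hecke algebras cutting out admissible representations of bounded depth, ensuring that every multiplicity pattern of degenerate Moy-Prasad types achieved by a mod-$\ell$ finite-length admissible $\pi$ also arises from some virtual characteristic-zero representation. Establishing this in sufficient generality is precisely where the hypothesis of very large $p$ is decisively used.
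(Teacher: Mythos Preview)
Your treatment of the $C=\C$ case is essentially what the paper does in its last paragraph: apply the Harish-Chandra--Howe expansion to the function $f$ whose Fourier transform is $[\phi+\fg^*_{x>-s}]$ and read off $c_\cO(\pi,\psi)=\til c_\cO(\pi,\til\psi)$. Your uniqueness argument via homogeneity and localization is workable but more elaborate than needed; the paper obtains uniqueness immediately from DeBacker's parametrization: for every $s$ and every $\cO$ there is some $(x,\phi)\in\dMP(s)$ with DeBacker lift $\cO_{x,-s,\phi}=\cO$, and the matrix $\bigl(\mu_{\cO}([\phi_{\cO'}+\fg^*_{x_{\cO'}>-s}])\bigr)$ is triangular with nonzero diagonal by \eqref{eq:triang}, so the $v_\cO^{>r}$ are already linearly independent.

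The genuine gap is your passage to general $C$. You propose to realize the multiplicity pattern of a mod-$\ell$ representation by a virtual $\overline{\Q}_\ell$-representation of the same depth and then transport \eqref{eq:LCE2} from characteristic zero. You correctly identify this as ``the bulk of the technical work'' --- but you do not do it, and there is no indication that such a lifting statement for pro-$p$ Hecke algebras is available in the required generality. The paper takes an entirely different route that avoids lifting altogether and works uniformly in $C$. The key is Proposition~\ref{prop:main}: for any two $(s_i,(x_i,\phi_i))\in\dMP$ with the same DeBacker lift, there is a linear relation
\[
v_{s_1,(x_1,\phi_1)}=c\cdot v_{s_0,(x_0,\phi_0)}+\sum_j c_j'\cdot v_{s_j',(x_j',\phi_j')}
\]
(with $\cO_{x_j',-s_j',\phi_j'}\gneq\cO_{x_0,-s_0,\phi_0}$) that is satisfied \emph{simultaneously} by the orbital-integral vectors $v_\cO^{>r}$ and by every multiplicity vector $v_{\pi,\psi}^{>r}$. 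The proof runs a geodesic in $\cB(G)$ between $x_0$ and $x_1$, uses an $\mathfrak{sl}_2$-triple adapted to the DeBacker lift to control the lattices along the way, and at each jump compares multiplicities via the elementary decomposition
\[
\Hom_{G_{y\ge\tau}}(\psi_\varphi,\pi)\cong\bigoplus_{\chi}\Hom_{G_{x=s}}(\psi_\chi,\pi^{G_{x>s}}),
\]
which holds over any $C$. Corollary~\ref{cor:dim} then shows that the solution space of all these relations inside $\uDM^{>r}$ is exactly $\DI^{>r}$, hence $\DM^{>r}\subset\DI^{>r}$ and every $v_{\pi,\psi}^{>r}$ is a $\Q$-combination of the $v_\cO^{>r}$. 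No comparison with characteristic zero is needed. Relatedly, your claim that ``very large $p$'' is used for the lifting step misidentifies the role of the hypothesis: in the paper it is used to invoke DeBacker's nilpotent-orbit parametrization and the existence of good $\mathfrak{sl}_2$-triples adapted to the Moy--Prasad filtration, not for any reduction-mod-$\ell$ compatibility.
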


As mentioned in the abstract, local character expansions with $\Q$-coefficients for mod-$\ell$ representations are also shown to exist in \cite[Corollary 5 and 6]{Tsa25a}. We present \cite{Tsa25a} as the ``main paper'' because we consider the method {\it op. cit.} to (have the potential to) give better results. The assumption on $p$ is however different; see Remark \ref{rmk:poschar}.

\begin{remark} The fact that $\til{c}_{\cO}(\pi,\til{\psi})\in\Q$ when $C=\C$ was shown in \cite[\S7]{Var14}.
\end{remark}

\subsection*{Acknowledgment} I am truly grateful to Marie-France Vign\'{e}ras for a beautiful talk which inspired this work, and for many very helpful discussions after that. 
%explained many illuminating ideas including a (different) proof of Corollary \ref{cor:asymp} when $G=\mathrm{GL}_n$. 
I sincerely thank Rahul Dalal and Mathilde Gerbelli-Gauthier for their generous and stimulating explanation about their related upcoming paper \cite{DGGM}\footnote{I think there is no overlap of content, but it doesn't hurt to emphasize that the cited work was essentially done and explained to me before this work of mine started.}, and to Professor James Arthur and well as Clifton Cunningham, Melissa Emory, Paul Mezo and Bin Xu for the wonderful conference in the Fields Institute in August 2025. Lastly, I owe an immense debt of gratitude to Stephen DeBacker for teaching me about many things related to his works and for numerous other enlightening discussions that very much influence this work. 

\section{Hypotheses and conventions}\label{sec:conv}

Recall that $F$ is our non-archimedean local field, $k$ its residue field and $\bG$ be a connected reductive group over $F$ so that $G=\bG(F)$. Our work is built very heavily and essentially on the work of DeBacker \cite{De02b}, and has many elements in common with the algorithms in \cite{De02a} and \cite{Wa06}.

Throughout this paper, unless otherwise stated we assume the hypotheses in \cite[\S4.2]{De02b} hold, which can be guaranteed if $p=\mathrm{char}(k)$ satisfies $p\ge\max(\rank_{\bar{F}}\bG,\,271)$; see \cite[Appendice 3]{Wa06} which verifies some even stronger hypotheses. We discuss later in Remark \ref{rmk:p} about optimal condition on $p$.

We denote by $G_{x\ge r}$ ($r\ge 0$) the Moy-Prasad subgroup that is denoted by $P_{x,r}$ in \cite[\S2.6]{MP94} and by $G_{x,r}$ in most recent works. This notation is extensively used in \cite{Spi18} and \cite{Spi21}. It has the following advantage for us: with respect to any apartment that contains $x$, the affine root subgroups contained in $G_{x\ge r}$ are exactly those affine roots that take value $\ge r$ at $x$. The same holds for $G_{x>r}:=\bigcup_{s>r}G_{x,s}$. Moreover, for the Moy-Prasad (sub)quotient $G_{x=r}:=G_{x\ge r}/G_{x>r}$, the affine root subquotients that appears in $G_{x=r}$ exactly come from those affine roots that take value $=r$ at $x$. Same for $\fg_{x\ge r}$, $\fg_{x=r}$, $\fg^*_{x=r}$, etc..

%Let $\cB(G)$ be the reduced Bruhat-Tits building for $G$. For any $x\in\cB(G)$ and $r\in\R$, denote by $\fg_{x\ge r}$ and $\fg^*_{x\ge r}$ the Moy-Prasad lattice \cite[\S3]{MP94} of depth $r$, by $\fg_{x>r}$ and $\fg^*_{x>r}$ that of depth $>r$, and by $\fg_{x=r}:=\fg_{x\ge r}/\fg_{x>r}$ and $\fg^*(F)_{x=r}:=\fg^*_{x\ge r}/\fg^*_{x>r}$ the Moy-Prasad quotient at depth $r$. For $r\ge 0$ we also denote by $G_{x\ge r}$ and $G_{x>r}$ the Moy-Prasad subgroups and $G_{x=r}$ the Moy-Prasad quotient. We denote by $\bG_x$ the algebraic reductive quotient at $x$, so that $G_{x=0}=\bG_x(k)$. Our notations for the Moy-Prasad filtration, different from the usual notations, are extensively used in \cite{Spi18}, \cite{Spi21} and \cite{Tsa23b}. They will prove to be convenient for us.

%\CC{Add properties of Moy-Prasad isomorphisms.}

%\CC{Add normalization of nilpotent orbital integrals and Fourier transforms.}

\section{Some elementary linear algebra}

Let $F$, $\bG$, $G$, $\fg$ and $\fg^*$ be as in the introduction. 
For any $r\ge 0$, $C$ as before, non-trivial additive $\psi:k\ra\Cc$ and any finite-length admissible $C$-representation $\pi$ with $\depth(\pi)\le r$, we are concerned with the following data of multiplicities
\[
v_{\pi,\psi}^{>r}=\left(\dim_C\Hom_{G_{x\ge s}}(\psi_{\phi},\pi)\right)_{s>r,(x,\phi)\in\dMP(s)}\in\uDM^{>r}:=\prod_{s>r,\,(x,\phi)\in\dMP(s)}\Q.
\]

Denote by
\[\DM^{>r}\subset\uDM^{>r}\]
the $\Q$-subspace of the infinite-dimensional space spanned by 
$v_{\pi,\psi}^{>r}$ for all possible $C$, $\psi$ and $\pi$ with $\depth(\pi)\le r$. Also consider some data of integrals:
\begin{equation}
    v_{\cO}^{>r}:=\left(\mu_{\cO}([\phi+\fg^*_{x>-s}])\right)_{{s>r},(x,\phi)\in\dMP(s)}\in\uDM^{>r}
\end{equation}
\[
\DI^{>r}:=\operatorname{span}_{\Q}\langle v_{\cO}^{>r}\;|\;\cO\in\nild\rangle\subset\uDM^{>r}.
\]
Equip $\nild$ with the usual partial order that $\cO_1\le \cO_2\Leftrightarrow\cO_1\subset\overline\cO_2$ where the latter is the closure in the $p$-adic topology. For degenerate $\phi\in\fg^*_{x=-s}$ we have the DeBacker lift \cite[Cor. 5.2.5, Lemma 5.3.3]{De02b} $\cO_{x,-s,\phi}\in\nild$ which is the unique smallest orbit in $\nild$ that intersects the coset $\phi+\fg^*_{x>-s}$. Since $\phi+\fg^*_{x>-s}$ is open, it intersects a nilpotent orbit $\cO\in\nild$ iff $\cO\ge\cO_{x,-s,\phi}$. In other words,
\begin{equation}\label{eq:triang}
\begin{array}{llll}
\mu_{\cO}([\phi+\fg^*_{x>-s}])&\not=&0,&\text{if }\cO\ge\cO_{x,-s,\phi},\\
\mu_{\cO}([\phi+\fg^*_{x>-s}])&=&0,&\text{if }\cO\not\ge\cO_{x,-s,\phi}.\\
\end{array}
\end{equation}
Moreover, for any $s$ and any $\cO\in\nild$ there exists $(x,\phi)$ such that $\cO_{x,-s,\phi}=\cO$. From this one easily sees that $\dim_{\R}\DI^{>r}=\#\nild$. 

For $v\in\uDM^{>r}$, $s>r$ and $(x,\phi)\in\dMP(s)$ we denote by $v_{s,(x,\phi)}\in\Q$ the corresponding component of $v$. 
Our main theorem will rely on the following property of the space $\DM^{>r}$.

\begin{proposition}\label{prop:main} Let $r\ge 0$ and $s_0,s_1>r$. Let $(x_i,\phi_i)\in\dMP(s_i)$ for $i=0,1$ be such that $\cO_{x_0,-s_0,\phi_0}=\cO_{x_1,-s_1,\phi_1}$. Then there exists a rational number $c\in p^{\Z}:=\{p^n\;|\;n\in\Z\}$ and a finite collection $c_j'\in\Z[1/p]$, $s_j'\ge\min(s_0,s_1)$ and $(x_j',\phi_j')\in\dMP(s_j')$ with $\cO_{x_j',s_j',\phi_j'}\gneq\cO_{x_0,-s_0,\phi_0}$, such that for every $v\in\DM^{>r}$ as well as $v\in\DI^{>r}$ we have 
\begin{equation}\label{eq:equation}
v_{s_1,(x_1,\phi_1)}=c\cdot v_{s_0,(x_0,\phi_0)}+\sum_j c_j'\cdot v_{s_j',(x_j',\phi_j')}
\end{equation}
\end{proposition}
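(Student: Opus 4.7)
My strategy is to exhibit \eqref{eq:equation} as arising from a single identity at the level of characteristic functions $[\phi+\fg^*_{x>-s}]$ on $\fg^*$, which then transfers simultaneously to $\DI^{>r}$ and $\DM^{>r}$ via two distinct pairings, thereby producing the \emph{same} coefficients $c$ and $c_j'$ on both sides.

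First, I would recast both $v_{\cO}^{>r}$ and $v_{\pi,\psi}^{>r}$ as linear pairings with $[\phi+\fg^*_{x>-s}]$. For $v_{\cO}^{>r}$ this is the definition. For $v_{\pi,\psi}^{>r}$, the Moy-Prasad isomorphism $G_{x=s}\cong\fg_{x=s}$ together with the Fourier pairing with $\fg^*_{x=-s}$ lets one rewrite the multiplicity $\dim_C\Hom_{G_{x\ge s}}(\psi_\phi,\pi)$ as the pairing of $[\phi+\fg^*_{x>-s}]$ with an auxiliary ``character-side'' distribution determined by $\pi$. Any linear identity among the $[\phi_i+\fg^*_{x_i>-s_i}]$'s, valid modulo a common null subspace for both pairings, therefore descends to \eqref{eq:equation} on both $\DI^{>r}$ and $\DM^{>r}$ with identical coefficients.

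Second, to construct such an identity, I would use the hypothesis $\cO_{x_0,-s_0,\phi_0}=\cO_{x_1,-s_1,\phi_1}=:\cO$ together with DeBacker's theory of good cosets \cite{De02b}. The two cosets can be joined by a finite chain of elementary moves --- scaling by the uniformizer, conjugation by a fixed $g\in G$, and moves between adjacent points in the building --- each giving an identity
\[[\phi_{i+1}+\fg^*_{x_{i+1}>-s_{i+1}}]=c_i\cdot[\phi_i+\fg^*_{x_i>-s_i}]+\sum_k c_{i,k}'\cdot[\phi_{i,k}'+\fg^*_{x_{i,k}'>-s_{i,k}'}],\]
with $c_i\in p^{\Z}$ a volume ratio of Moy-Prasad quotients (finite $\F_q$-vector spaces of $p$-power order), $c_{i,k}'\in\Z[1/p]$, and each error coset $\phi_{i,k}'+\fg^*_{x_{i,k}'>-s_{i,k}'}$ having DeBacker lift strictly larger than $\cO$. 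The strict inequality is forced by the minimality defining $\cO$ combined with the observation that the error cosets arise from proper subdivisions of the original cosets. Composing gives the desired identity in the test-function quotient.

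The main obstacle is verifying that this identity is ``null'' for both pairings. For the orbital-integral pairing this is transparent from the triangular property \eqref{eq:triang}. For the representation-theoretic pairing, this amounts to a DeBacker-style homogeneity of multiplicities, which I would establish by a careful Mackey/Frobenius-reciprocity analysis of the restriction of $\pi$ between Moy-Prasad subgroups associated to adjacent points in the building, using that the resulting intertwining contributions correspond, under the Moy-Prasad/Fourier translation, to test data of strictly larger DeBacker lift.
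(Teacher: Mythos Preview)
Your outline matches the paper's approach in its essentials: reduce to a finite chain of elementary comparisons, at each of which one coset is refined into sub-cosets, the sub-cosets with DeBacker lift equal to $\cO$ give the main term, those with lift strictly greater give the error terms, and non-degenerate sub-cosets vanish on both the $\DI$-side (they miss the nilpotent cone) and the $\DM$-side (since $s>\depth(\pi)$). The $\DM$-side identity is exactly the Frobenius-reciprocity decomposition of $\Hom_{G_{y\ge\tau}}(\psi_\varphi,\pi)$ according to extensions of $\psi_\varphi$ from $G_{y\ge\tau}$ to $G_{x\ge s}$, just as you say.

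Two points in your plan are genuinely off, however. First, ``scaling by the uniformizer'' is not one of the paper's moves and does not in general preserve the orbit (e.g.\ in $\SL_2$ the two regular nilpotent $F$-orbits are swapped by $\varpi$ when $\varpi\notin (F^\times)^2$). The paper instead uses a single $G$-conjugation to arrange that the \emph{same} $\slt$-triple $(\Phi,H,E)$ is adapted to both cosets, and then runs along the building geodesic $t\mapsto x_t$ with $s_t=(1-t)s_0+ts_1$; convexity gives $\Phi\in\fg_{x_t\ge -s_t}$, $H\in\fg_{x_t\ge 0}$, $E\in\fg_{x_t\ge s_t}$ for all $t$, so every intermediate coset has DeBacker lift exactly $\cO$, and semi-continuity of Moy-Prasad lattices then reduces to finitely many nested comparisons. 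Your chain does not explain how the DeBacker lift is controlled along the way, and this is precisely what the common $\slt$-triple buys. Second, the leading coefficient at each step is \emph{not} the full volume ratio $[\fg_{y>-\tau}:\fg_{x>-s}]$ but the number $N$ of sub-cosets $\chi$ with $\cO_{x,-s,\chi}=\cO$; the paper invokes \cite[Cor.~5.2.3]{De02b} to show these all lie in $\Ad(G_{x=0})\phi$ and in fact form a single orbit under the image of the pro-$p$ group $G_{y>0}$, whence $N$ is a $p$-power. Your ``volume ratio of $\F_q$-vector spaces'' gives the wrong number whenever there are sub-cosets of type (A) or (C).
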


The proof of Proposition \ref{prop:main} will be the topic of \S\ref{sec:main}. To prove the main theorem we consider

\begin{definition} A subset $W\subset\nild$ is {\bf closed} if $\bigsqcup_{\cO\in W}\cO\subset(\fg^*)^{\mathrm{nil}}$ is closed, i.e. $\cO_1\in W\text{ and }\cO_1\ge\cO_2\implies\cO_2\in W$.
\end{definition}
For any closed $W\subset\nild$, let 
\[
\uDM^{>r}_W:=\{v\in\uDM^{>r}\;|\;v_{s,(x,\phi)}=0\text{ if }\cO_{x,-s,\phi}\not\in W\}.
\]
and
\[
\DM_{W}^{>r}:=\DM^{>r}\cap\uDM_{W}^{>r},\;\;\DI_{W}^{>r}:=\DI^{>r}\cap\uDM_{W}^{>r}.
\]
We have
\begin{corollary}\label{cor:dim} Assume Proposition \ref{prop:main}. Let $r\ge 0$ and $W\subset\nild$ be closed. Then the space $\DI_W^{>r}$ is equal to the collection of vectors in $\uDM_W^{>r}$ that satisfy \eqref{eq:equation} for all $s_1$, $s_2$, $(x_1,\phi_1)$ and $(x_2,\phi_2)$ in Proposition \ref{prop:main} for any collection of choices of $c$, $c_i'$, $s_i'$, and $(x_i',\phi')$. Moreover
$\langle v_{\cO}^{>r}\;|\;\cO\in W\rangle$ is a basis for $\DI_W^{>r}$.
\end{corollary}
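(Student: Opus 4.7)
The plan is to first establish the basis claim and then deduce the equational characterization by a downward induction on $|W|$ using the finiteness of $\nild$.

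For the basis: each $v_\cO^{>r}$ with $\cO\in W$ lies in $\uDM_W^{>r}$, because by \eqref{eq:triang} its nonzero components occur only at $(s,(x,\phi))$ with $\cO_{x,-s,\phi}\le\cO$, and downward-closedness of $W$ places these in $W$. Linear independence is inherited from the already-noted fact $\dim_\Q\DI^{>r}=\#\nild$. For spanning, suppose toward a contradiction that $v=\sum_\cO a_\cO v_\cO^{>r}\in\DI_W^{>r}$ has $a_{\cO'}\ne 0$ for some $\cO'\notin W$, and pick such $\cO'$ maximal in the partial order. Downward-closedness of $W$ yields $\cO>\cO'\Rightarrow\cO\notin W$, so maximality of $\cO'$ forces $a_\cO=0$ for every $\cO>\cO'$. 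Choosing $s>r$ and $(x,\phi)\in\dMP(s)$ with $\cO_{x,-s,\phi}=\cO'$ (possible by the existence assertion following \eqref{eq:triang}), \eqref{eq:triang} reduces $v_{s,(x,\phi)}$ to $a_{\cO'}\mu_{\cO'}([\phi+\fg^*_{x>-s}])\ne 0$, contradicting $v\in\uDM_W^{>r}$.

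For the equational characterization, one inclusion is immediate from Proposition \ref{prop:main} applied to the basis vectors $v_\cO^{>r}$ together with linearity. For the converse I would induct on $|W|$; the base $W=\emptyset$ is trivial. In the inductive step, pick $\cO_0$ maximal in $W$, so $W':=W\setminus\{\cO_0\}$ is still closed. Given $v\in\uDM_W^{>r}$ satisfying the equations, choose $(s_0,(x_0,\phi_0))\in\dMP(s_0)$ with $s_0>r$ and $\cO_{x_0,-s_0,\phi_0}=\cO_0$, and form
$$v':=v-b\cdot v_{\cO_0}^{>r},\qquad b:=\frac{v_{s_0,(x_0,\phi_0)}}{\mu_{\cO_0}([\phi_0+\fg^*_{x_0>-s_0}])},$$
which still satisfies the equations since $v_{\cO_0}^{>r}\in\DI^{>r}$ does.

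The crucial claim is $v'\in\uDM_{W'}^{>r}$. At any $(s,(x,\phi))$ with $\cO_{x,-s,\phi}\notin W$, both $v_{s,(x,\phi)}=0$ (since $v\in\uDM_W^{>r}$) and $(v_{\cO_0}^{>r})_{s,(x,\phi)}=0$ (since $\cO_0\ge\cO_{x,-s,\phi}$ would put $\cO_{x,-s,\phi}$ in $W$ by closedness), so $v'$ vanishes. At $(s,(x,\phi))$ with $\cO_{x,-s,\phi}=\cO_0$, apply Proposition \ref{prop:main} with $(s_0,(x_0,\phi_0))$ in the role of the distinguished pair: the main term $c\cdot v'_{s_0,(x_0,\phi_0)}$ vanishes by the construction of $b$, and every correction term involves $\cO_{x_j',-s_j',\phi_j'}\gneq\cO_0$, which lies outside $W$ by maximality of $\cO_0$, so $v'$ vanishes there by the previous case. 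Thus $v'_{s,(x,\phi)}=0$ whenever $\cO_{x,-s,\phi}\notin W'$, and the induction hypothesis yields $v'\in\DI_{W'}^{>r}\subset\DI_W^{>r}$, hence $v=v'+b\cdot v_{\cO_0}^{>r}\in\DI_W^{>r}$. I expect the main care-point to be this interlocking of downward-closedness of $W$, the triangularity \eqref{eq:triang}, and the strictly-larger-orbits clause of Proposition \ref{prop:main}; everything else is bookkeeping.
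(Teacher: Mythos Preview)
Your proof is correct and follows essentially the same inductive strategy as the paper: remove a maximal orbit $\cO_0$ from $W$, subtract the appropriate multiple of $v_{\cO_0}^{>r}$, and use the strictly-larger-orbit clause of Proposition~\ref{prop:main} together with closedness of $W$ to land in $\uDM_{W'}^{>r}$. The only cosmetic difference is that you establish the basis claim for $\DI_W^{>r}$ directly at the outset (via the triangularity \eqref{eq:triang} and a maximal-counterexample argument), whereas the paper folds it into the same induction on $|W|$.
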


\begin{proof} Denote by $\DS_W^{>r}\subset\uDM_W^{>r}$ the subspace satisfying \eqref{eq:equation} for any collection of choices. Proposition \ref{prop:main} asserts that $\DI_W^{>r}\subseteq\DS_W^{>r}$ and we have to prove $\DS_W^{>r}=\DI_W^{>r}$. When $W$ is empty the assertion is vacuous. Suppose $W$ is non-empty. Let $\cO\in W$ be maximal, i.e. such that $W':=W\bsl\{\cO\}$ is closed. Then $v_{\cO}^{>r}\in\DI_W^{>r}\bsl\DI_{W'}^{>r}$ by the property of DeBacker lift, and $\DI_W^{>r}$ is spanned by $\DI_{W'}^{>r}$ and $v_{\cO}^{>r}$ which proves the last statement by induction.

Choose an $s'>r$ and $(x',\phi')$ with $\cO_{x',-s',\phi'}=\cO$. Using \eqref{eq:equation} in which $\cO_{x_i',s_i',\phi_i'}\gneq\cO_{x_1,s_i,\phi_i}$, we observe that for any $v\in\DS_W^{>r}$ with $v_{s',(x',\phi')}=0$, we have $\cO_{x,-s,\phi}=\cO\implies v_{s,(x,\phi)}=0$, i.e. $v\in\DS_{W'}^{>r}$.
Since $(v_{\cO})_{s',(x',\phi')}\not=0$, any $v\in\DS_W^{>r}$ can therefore be written as a linear combination of $v_{\cO}$ and a vector in $\DS_{W'}^{>r}$. By induction we may suppose $\DS_{W'}^{>r}=\DI_{W'}^{>r}$ and thus $v\in\DI_W^{>r}$ as required.
\end{proof}

\begin{proof}[Proof of Theorem \ref{thm:main}] By Corollary \ref{cor:dim} for $W=\nild$, we have that $\DM^{>r}\subset\DI^{>r}=\operatorname{span}_{\Q}\langle v_{\cO}^{>r}\rangle$ for any $r$. For any $C$, $\psi$ and $\pi$ as before, let $r=\depth(\pi)$ with which we can write 
\[
v_{\pi,\psi}^{>r}=\sum_{\cO\in(\fg^*)^{\mathrm{nil}}/\sim}c_{\cO}(\pi,\psi)\cdot v_{\cO}^{>r}.
\]
Then Theorem \ref{thm:main} holds with these $c_{\cO}(\pi,\psi)$. These coefficients are unique because $v_{\cO}^{>r}$ is a basis.

Now suppose $C=\C$. Then for sufficiently large $s$ we have
\[
\dim_C\Hom_{G_{x\ge s}}(\psi_\phi,\pi)=\Theta_{\pi}(\hat{f}\circ\log),\;\;f=[\phi+\fg^*_{x>-s}]
\]
plugging in \eqref{eq:LCE} then proves that $c_{\cO}(\pi,\psi)=\til{c}_{\cO}(\pi,\til{\psi})$.
\end{proof}

\begin{corollary}\label{cor:formula} Fix $r\ge 0$. For each $\cO\in\nild$, choose $s_{\cO}>r$, $(x_{\cO},\phi_{\cO})\in\dMP(s_{\cO})$ such that $\cO=\cO_{x_{\cO},-s_{\cO},\phi_{\cO}}$. Then there exists coefficients $A_{{\cO},{\cO}'}\in\Q$ for $\cO,\cO'\in\nild$ such that
\begin{enumerate}
    \item $A_{\cO',\cO}=0$ for any $\cO'\not\ge \cO$.
    \item For any $C$, $\psi$ and finite-length admissible $C$-representation $\pi$ with $\depth(\pi)\le r$ as in the introduction, we have
    \[    c_{\cO}(\pi,\psi)=\sum_{\cO'\ge\cO}A_{\cO',\cO}\cdot\dim_C\Hom_{G_{x_{\cO'}\ge s_{\cO'}}}(\psi_{\phi_{\cO'}},\pi).\]
\end{enumerate}
\end{corollary}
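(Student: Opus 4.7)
The plan is to invert a finite triangular linear system relating the chosen multiplicities to the coefficients $c_\cO(\pi,\psi)$; the only nontrivial ingredient will be the triangular structure coming from the DeBacker lift.

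First, I would recall from the proof of Theorem \ref{thm:main} the identity
\[
v_{\pi,\psi}^{>r}=\sum_{\cO'\in\nild}c_{\cO'}(\pi,\psi)\cdot v_{\cO'}^{>r}
\]
in $\DI^{>r}$, valid for any $\pi$ with $\depth(\pi)\le r$, and evaluate both sides at each chosen coordinate $(s_\cO,(x_\cO,\phi_\cO))$. This yields, for every $\cO\in\nild$, the equation
\[
d_\cO := \dim_C\Hom_{G_{x_\cO\ge s_\cO}}(\psi_{\phi_\cO},\pi) = \sum_{\cO'\in\nild} M_{\cO,\cO'}\cdot c_{\cO'}(\pi,\psi),
\]
where $M_{\cO,\cO'}:=\mu_{\cO'}([\phi_\cO+\fg^*_{x_\cO>-s_\cO}])$; note that $M_{\cO,\cO'}\in\Q$ by the very definition $\uDM^{>r}:=\prod\Q$, which forces the coordinates of $v_{\cO'}^{>r}$ to be rational.

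Next, I would read off triangularity from property \eqref{eq:triang} of the DeBacker lift. Because $\cO_{x_\cO,-s_\cO,\phi_\cO}=\cO$ by construction, $M_{\cO,\cO'}=0$ whenever $\cO'\not\ge\cO$, while $M_{\cO,\cO}\ne 0$ since $\cO$ itself meets the open set $\phi_\cO+\fg^*_{x_\cO>-s_\cO}$. Writing $M=D(I-N)$ with $D$ the diagonal part and $N$ strictly triangular with respect to the partial order $\le$ on $\nild$, the Neumann expansion $M^{-1}=\bigl(\sum_{k\ge 0}N^k\bigr)D^{-1}$ terminates (as $\nild$ is finite), has rational entries, and inherits the same support pattern: $(M^{-1})_{\cO,\cO'}=0$ unless $\cO'\ge\cO$.

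Finally, setting $A_{\cO',\cO}:=(M^{-1})_{\cO,\cO'}\in\Q$ then inverts the system above to yield
\[
c_\cO(\pi,\psi)=\sum_{\cO'\ge\cO}A_{\cO',\cO}\cdot d_{\cO'},
\]
establishing both (1) and (2). I do not foresee a genuine obstacle: once the triangularity has been extracted from \eqref{eq:triang}, the corollary reduces to a standard finite-dimensional triangular matrix inversion over $\Q$.
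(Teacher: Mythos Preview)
Your proposal is correct and follows essentially the same route as the paper: plug the chosen data $(s_{\cO'},(x_{\cO'},\phi_{\cO'}))$ into \eqref{eq:LCE2}, obtain a finite linear system with matrix $M_{\cO,\cO'}=\mu_{\cO'}([\phi_\cO+\fg^*_{x_\cO>-s_\cO}])$, observe from \eqref{eq:triang} that $M$ is triangular with nonzero diagonal, and take $A$ to be its inverse. The only difference is that you spell out the Neumann-series inversion explicitly, whereas the paper simply asserts the triangular inverse exists.
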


\begin{proof} We plug in $(s_{\cO'},(x_{\cO'},\phi_{\cO'}))$ to \eqref{eq:LCE2} and get a sytem of linear equations relating $c_{\cO}(\pi,\psi)$ and $\dim_C\Hom_{G_{x_{\cO'}\ge s_{\cO'}}}(\psi_{\phi_{\cO'}},\pi)$, with the matrix of the system given by $\mu_{\cO}([\phi_{\cO'}+\fg^*_{x_{\cO'}>-s_{\cO'}}])$. 
We then have the matrix $\left(A_{\cO',\cO}\right)$ equal to the inverse of the matrix $\left(\mu_{\cO}([\phi_{\cO'}+\fg^*_{x_{\cO'}>-s_{\cO'}}])\right)$, which are invertible triangular matrices thanks to \eqref{eq:triang}.
\end{proof}

\section{Proof of Proposition \ref{prop:main}}\label{sec:main}

Fix $r\ge 0$. In this section we fix a $G$-invariant symmetric non-degenerate bilinear form $\fg\times\fg\ra F$ such that it induces $\fg^*_{x=-s}\cong\fg_{x=-s}$ for every $x\in\cB(G)$ and $s\in\R$; such bilinear form exists \cite[Proposition 4.1]{AR00} under our assumption on $p$. Using the bilinear form, we shall identify $\fg^*$ with $\fg$ from now to use the theory $\slt$-triples. Let
\[\dMP:=\bigcup_{s>r}\{s,(x,\phi)\;|\;(x,\phi)\in\dMP(s)\}.\] Let us say $(s_0,(x_0,\phi_0)),\,(s_1,(x_1,\phi_1))$ are {\bf connected} if they satisfy the conditions in Proposition \ref{prop:main} for some $c\in p^{\Z}$, $c_i'\in\Z[1/p]$ and $(s_i',(x_i',\phi_i'))\in\dMP$. It is easy to see that
\begin{enumerate}
    \item The relation of being connected is an equivalence relation.
    \item For any $g\in G$ and $(s,(x,\phi))\in\dMP$, we have that $(s,(x,\phi))$ and $(s,(g.x,g.\phi))$ are connected.
\end{enumerate}
Suppose we are given $(s_0,(x_0,\phi_0)),\,(s_1,(x_1,\phi_1))\in\dMP$ such that $\cO_{x_0,-s_0,\phi_0}=\cO_{x_1,-s_1,\phi_1}$. We have to prove they are connected in the above sense. From \cite[Lemma 5.3.3]{De02b}, we have $\slt$-triple $(\Phi,H,E)\in\fg^3$ such that
\begin{enumerate}[label=(\alph*)]
    \item\label{a} $\Phi\in\phi_0+\fg_{x_0>-s_0}$, $H\in\fg_{x_0\ge 0}$ and $E\in\fg_{x_0\ge s_0}$.
    \item\label{b} \cite[Corollary 5.2.5]{De02b} asserts as a consequence of \ref{a} that $\Ad(G)\Phi=\cO_{x_0,-s_0,\phi_0}$, i.e. any nilpotent orbit $\cO$ that intersects $\phi+\fg_{x_0>-s_0}$ satisfies $\cO\ge\Ad(G)\Phi$.
\end{enumerate}
We similarly have another $(\Phi_1,H_1,E_1)$ satisfying \ref{a} and thus \ref{b} for $(s_1,(x_1,\phi_1))$. Since $\cO_{x_0,-s_0,\phi_0}=\cO_{x_1,-s_1,\phi_1}$ we have that $(\Phi_1,H_1,E_1)$ is conjugate to $(\Phi,H,E)$. 
%\CC{Add a citation for this fact applicable with our condition on $\mathrm{char}(F)$.} 
Applying a $G$-conjugation on $(s_1,(x_1,\phi_1))$ as well as $(\Phi_1,H_1,E_1)$. We may suppose $(\Phi_1,H_1,E_1)=(\Phi,H,E)$, i.e. \ref{a} and thus \ref{b} above holds with $(s_1,(x_1,\phi_1))$ in place of $(s_0,(x_0,\phi_0))$ and the same $(\Phi,H,E)$.

Consider the geodesic $(t\mapsto x_t)_{t\in[0,1]}$ in the building $\cB(G)$ and also $s_t:=(1-t)s_0+ts_1\in\R$. Observe that
\begin{equation}\label{eq:geo}
\left(\fg_{x_0\ge s_0}\cap\fg_{x_1\ge s_1}\right)\subset\fg_{x_t\ge s_t}.
\end{equation}
Indeed, any affine root that takes value $\ge s_0$ at $x_0$ and value $\ge s_1$ at $x_1$ must take value $\ge s_t$ at $x_t$ for any $t\in[0,1]$. Equation \eqref{eq:geo} and similar equations imply
\[
\Phi\in\fg_{x_t\ge-s_t},\;H\in\fg_{x_t\ge 0}\text{, and }E\in\fg_{x_t\ge s_t}.
\]
Let $\phi_t:=\Phi+\fg_{x_t>-s_t}$, i.e. the image of $\Phi$ in $\fg_{x_t=-s_t}$. Then \ref{a} and therefore \ref{b} holds for $(s_t,(x_t,\phi_t))$. 
Moreover, because the Moy-Prasad lattice $\fg_{x\ge s}$ (resp. $\fg_{x>s}$) is upper semi-continuous (resp. lower semi-continuous) in both $x$ and $s$, there exist $0=t_0<t_1<t_2<...<t_m=1$ such that
\begin{enumerate}[label=(\roman*)]
    \item For any $i\in\{0,1,...,m-1\}$ and $u,v\in(t_i,t_{i+1})$ we have
\[
\begin{array}{l}
G_{x_u\ge s_u}=G_{x_v\ge s_v}\\
G_{x_u> s_u}=G_{x_v> s_v}\\
\fg_{x_u\ge s_u}=\fg_{x_v\ge s_v}\\
\fg_{x_u> s_u}=\fg_{x_v> s_v}\\
\fg_{x_u\ge-s_u}=\fg_{x_v\ge-s_v}\\
\fg_{x_u>-s_u}=\fg_{x_v>-s_v}
\end{array}
\]
    In particular $(s_u,(x_u,\phi_u))$ and $(s_v,(x_v,\phi_v))$ are connected for trivial reason.
    \item\label{ii} Consider any $i\in\{0,1,...,m\}$ and arbitrary $u\in(t_{i-1},t_{i+1})$ (for which we take $t_{-1}=t_0=0$ and $t_{m+1}=t_m=1$). 
    Write $s=s_{t_i}$, $x=x_{t_i}$, $\phi=\phi_{t_i}$, $\tau=s_u$, $y=x_u$ and $\varphi=\phi_u$. Then
\begin{equation}\begin{array}{l}
    G_{x>0}\subset G_{y>0}\subset G_{y\ge 0}\subset G_{x\ge 0}\\
    G_{x>s}\subset G_{y>\tau}\subset G_{y\ge\tau}\subset G_{x\ge s}\\
    \fg_{x>s}\subset \fg_{y>\tau}\subset \fg_{y\ge\tau}\subset \fg_{x\ge s}\\
    \fg_{x>-s}\subset \fg_{y>-\tau}\subset \fg_{y\ge-\tau}\subset \fg_{x\ge-s}\\
\end{array}\end{equation}
\end{enumerate}

It remains to show that $(s,(x,\phi))$ and $(\tau,(y,\varphi))$ as in \ref{ii} are connected, i.e. to prove Proposition \ref{prop:main} in this case. Note that $\varphi$ can be viewed as a coset $\varphi+\fg_{y>-\tau}$ which is a union of $\fg_{x>-s}$ cosets:
\begin{equation}\label{eq:fork}
\varphi+\fg_{y>-\tau}=\bigsqcup_{\chi\in\varphi+\fg_{y>-\tau}/\fg_{x>-s}}\chi+\fg_{x>-s}.
\end{equation}
The definition of DeBacker lifts implies that $\cO_{x,-s,\chi}\ge\cO_{y,-\tau,\varphi}$ for any degenerate $\chi$ in \eqref{eq:fork}. Suppose $\chi$ is such that $\cO_{x,-s,\chi}=\cO_{y,-\tau,\varphi}$. In this case \cite[Corollary 5.2.3]{De02b} asserts that
\begin{equation}\label{eq:fork2}
\begin{array}{ll}
&\left(\varphi+\fg_{y>-\tau}\right)\cap\Ad(G)\Phi\subset\Ad(G_{y>0})\Phi\\
\implies&\left(\chi+\fg_{x>-s}\right)\cap\Ad(G)\Phi\subset\Ad(G_{y>0})\Phi\subset\Ad(G_{x\ge 0})\Phi\subset\Ad(G_{x=0})\phi+\fg_{x>-s}\\
\implies&\chi\in\Ad(G_{x=0})\phi.
\end{array}
\end{equation}
Here $\cO_{x,-s,\chi}=\cO_{y,-\tau,\varphi}$ is used to guarantee that $\left(\chi+\fg_{x>-s}\right)\cap\Ad(G)\Phi$ is non-empty so that the last implication in \eqref{eq:fork2} holds. We have shown that in \eqref{eq:fork} we have either
\begin{enumerate}[label=(\Alph*)]
    \item $\chi$ is non-degenerate,
    \item\label{B} $\chi\in\Ad(G_{x=0})\phi$, or
    \item\label{C} $\cO_{x,-s,\chi}>\cO_{y,-\tau,\varphi}$.
\end{enumerate} Let $N>0$ be the number of $\chi$ in \ref{B}; this is a power of $p$ because such $\chi$ lie in a single orbit under $\im(G_{y>0}\ra G_{x=0})$, which is a finite $p$-group since $G_{y>0}$ is a pro-$p$ group. Let $\chi_1,...,\chi_m\in\fg_{x=-s}$ be a list of $\chi$ in case \ref{C}. For any $\cO\in\nil$ we have
\[
\mu_{\cO}([\varphi+\fg_{y>-\tau}])=N\cdot\mu_{\cO}([\phi+\fg_{x>-s}])+\sum_{j=1}^m\mu_{\cO}([\chi_j+\fg_{x>-s}]).
\]
This proves the part of Proposition \ref{prop:main} for $v\in\DI^{>r}$. Next, take any $C$ and $\psi$ as in the introduction and a finite-length admissible $C$-representation $\pi$ with $\depth(\pi)\le r$.
We have to prove
\[
\dim_C\Hom_{G_{y\ge\tau}}(\psi_{\varphi},\pi)=N\cdot\dim_C\Hom_{G_{x\ge s}}(\psi_{\phi},\pi)+\sum_{j=1}^m\dim_C\Hom_{G_{x\ge s}}(\psi_{\chi_j},\pi).
\]
Note that $\varphi$ is a $k$-linear functional on $\fg_{y=\tau}=\fg_{y\ge\tau}/\fg_{y>\tau}$ which can be pulled back to $\fg_{y\ge\tau}/\fg_{x>s}\subset\fg_{x=s}$. The set of its possible extensions to $\fg_{x=s}$ is exactly the indexing set in the RHS of \eqref{eq:fork}. Thus we have
\begin{equation}\label{eq:fork3}
\Hom_{G_{y\ge\tau}}(\psi_{\varphi},\pi)=\Hom_{G_{y\ge\tau}/G_{x>s}}(\psi_{\varphi},\pi^{G_{x>s}})
=\bigoplus_{\chi\in\varphi+\fg_{y>-\tau}/\fg_{x>-s}}\Hom_{G_{x=s}}(\psi_{\chi},\pi^{G_{x>s}}).
\end{equation}
\[
=\bigoplus_{\begin{subarray}{c}\chi\in\varphi+\fg_{y>-\tau}/\fg_{x>-s}\\\chi\text{ is degenerate}\\\cO_{x,-s,\chi}=\cO_{y,-\tau,\varphi}
\end{subarray}}\Hom_{G_{x=s}}(\psi_{\chi},\pi^{G_{x>s}})+\bigoplus_{\begin{subarray}{c}\chi\in\varphi+\fg_{y>-\tau}/\fg_{x>-s}\\\chi\text{ is degenerate}\\\cO_{x,-s,\chi}>\cO_{y,-\tau,\varphi}
\end{subarray}}\Hom_{G_{x=s}}(\psi_{\chi},\pi^{G_{x>s}}).\]
\[
=N\cdot\dim_C\Hom_{G_{x\ge s}}(\psi_{\phi},\pi)+\sum_{j=1}^m\dim_C\Hom_{G_{x\ge s}}(\psi_{\chi_j},\pi)
\]
as promised. We have thus proved Proposition \ref{prop:main} and therefore completed the proof of Theorem \ref{thm:main}.

\section{Additional remarks}

Let us revisit the proof in \S\ref{sec:main}. Instead of $s_0,s_1>r$ we suppose that $s_1>s_0=r$, and that $\pi$ is a finite-length admissible $C$-representation with $\depth(\pi)\le r$, possibly with equality. The proof becomes different only when $s=s_0=r<\tau$ in \ref{ii}, so that in \eqref{eq:fork} we no longer have $\chi$ degenerate; instead they could be the unrefined minimal $K$-types of Moy and Prasad in \cite[Definition 5.1]{MP94} and \cite[\S II.5.1]{Vig96}. (When $r=0$ this requires corresponding change of notations but otherwise works.)

Nevertheless, equation \eqref{eq:fork3} is still valid and implies that $\dim_C\Hom_{G_{y\ge\tau}}(\psi_{\varphi},\pi)$ can be computed in terms of some $\dim_C\Hom_{G_{x=r}}(\rho,\pi)$ for some $\rho\in\Irr_C(G_{x=r})$. More precisely we have

\begin{corollary} Let $F$, $\bG$, $G$, $C$ and $\psi$ be as in Theorem \ref{thm:main}. Fix $r\ge 0$. Then for any $\cO\in\nild$, there exists a finite number of $x_i\in\cB(G)$, $\rho_i\in\Irr_C(G_{x_i=r})$ and $c_i\in\Q$ such that for any finite-length $C$-representation $\pi$ with $\depth(\pi)\le r$ we have
\[
c_{\cO}(\pi,\psi)=\sum_ic_i\cdot\dim_C\Hom_{G_{x_i=r}}(\rho_i,\pi^{G_{x_i>r}}).
\]
\end{corollary}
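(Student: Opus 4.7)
The plan is to combine Corollary \ref{cor:formula} with the revisited connectedness argument sketched at the start of this section, organized via downward induction on the partial order of $\nild$.

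First, I would apply Corollary \ref{cor:formula} to write
\[
c_{\cO}(\pi,\psi) \;=\; \sum_{\cO' \ge \cO} A_{\cO', \cO} \cdot \dim_C \Hom_{G_{x_{\cO'} \ge s_{\cO'}}}(\psi_{\phi_{\cO'}}, \pi)
\]
for some fixed choice of $s_{\cO'} > r$ and $(x_{\cO'},\phi_{\cO'}) \in \dMP(s_{\cO'})$. This reduces the problem to expressing each summand $\dim_C \Hom_{G_{x_{\cO'} \ge s_{\cO'}}}(\psi_{\phi_{\cO'}}, \pi)$ as a finite $\Q$-linear combination of $\dim_C \Hom_{G_{x = r}}(\rho, \pi^{G_{x > r}})$ for various $x \in \cB(G)$ and $\rho \in \Irr_C(G_{x = r})$.

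To this end, I would prove by downward induction on $\cO' \in \nild$ the following auxiliary claim: for any $s_1 > r$ and $(x_1, \phi_1) \in \dMP(s_1)$ with $\cO_{x_1, -s_1, \phi_1} = \cO'$, the quantity $\dim_C \Hom_{G_{x_1 \ge s_1}}(\psi_{\phi_1}, \pi)$ admits an expression of the claimed form. The base case is $\cO'$ maximal in $\nild$. For the inductive step, I would pick $(x_0, \phi_0) \in \dMP(r)$ with $\cO_{x_0, -r, \phi_0} = \cO'$ (existence noted right after \eqref{eq:triang}), together with an $\slt$-triple $(\Phi, H, E)$ satisfying condition \ref{a} of the proof of Proposition \ref{prop:main} simultaneously at $(x_0, r)$ and $(x_1, s_1)$. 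Running the geodesic $(x_t, s_t)_{t \in [0,1]}$ between the two points and applying the revisited version of \eqref{eq:fork3} at each jump of the Moy-Prasad structure yields an identity expressing $\dim_C \Hom_{G_{x_1 \ge s_1}}(\psi_{\phi_1}, \pi)$ as a $\Q$-linear combination of: (i) level-$r$ Hom spaces $\dim_C \Hom_{G_{x = r}}(\psi_\chi, \pi^{G_{x > r}})$ coming from the first jump at $t_0 = 0$, where $\chi$ ranges over degenerate extensions (whose DeBacker lifts are $\ge \cO'$) as well as non-degenerate ones (i.e.\ unrefined minimal $K$-types in the sense of \cite[\S II.5.1]{Vig96}); and (ii) Hom spaces $\dim_C \Hom_{G_{x'_j \ge s'_j}}(\psi_{\phi'_j}, \pi)$ at interior jumps with $s'_j > r$ and DeBacker lift strictly larger than $\cO'$. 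Terms of type (i) already have the desired shape: when $r > 0$, $\psi_\chi$ is a character of the abelian group $G_{x = r}$ and hence lies in $\Irr_C(G_{x = r})$; when $r = 0$ one uses the notational adjustment mentioned in the opening remark to replace $\psi_\chi$ by a general irrep of the reductive quotient $G_{x = 0}$. Terms of type (ii) are reducible to the claimed form by the induction hypothesis.

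The main obstacle I anticipate is establishing the existence of a suitable $x_0$: given an $\slt$-triple $(\Phi, H, E)$ for $\cO'$ adapted to $(x_1, s_1)$, one needs $x_0 \in \cB(G)$ with $\Phi \in \fg_{x_0 \ge -r}$, $H \in \fg_{x_0 \ge 0}$ and $E \in \fg_{x_0 \ge r}$. This should follow from the building-theoretic constructions in \cite{De02b} (for instance, starting from a point adapted to $(\Phi, H, E)$ at its natural grading and then translating by a suitable cocharacter related to $H$ to rescale the filtration values of $\Phi$ and $E$ to level $r$). Once $x_0$ is in hand, condition \ref{a} propagates along the geodesic by affine-linearity of affine roots (as in the original proof of Proposition \ref{prop:main}), and the remainder of the argument is a mechanical iteration of the revisited \eqref{eq:fork3} combined with the downward induction on $\nild$.
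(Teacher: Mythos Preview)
Your proposal is correct and is essentially a fleshed-out version of the paper's argument, which consists only of the paragraph preceding the corollary: rerun the geodesic argument of \S\ref{sec:main} with $s_0=r$, observe that \eqref{eq:fork3} remains valid at the boundary jump (where the cosets $\chi$ may now be non-degenerate minimal $K$-types), and conclude. Your explicit downward induction on $\nild$ together with the appeal to Corollary~\ref{cor:formula} is exactly what is needed to turn that one-paragraph sketch into a proof, since the interior jumps contribute error terms at depths $>r$ with strictly larger DeBacker lift that must themselves be reduced to depth $r$. The obstacle you flag is not genuine: the existence of $(x_0,\phi_0)\in\dMP(r)$ with DeBacker lift $\cO'$ is noted right after \eqref{eq:triang}, and a common $\slt$-triple at both $(x_0,r)$ and $(x_1,s_1)$ is obtained exactly as in the proof of Proposition~\ref{prop:main}, by $G$-conjugating one endpoint so that its triple matches the other.
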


%\CC{Probably remark that similar ideas are in \cite{BM97} and \cite{CO23}.}

%\CC{Add a remark that this (and eigenspace decomposition under scaling) implies that the nilpotent orbital integrals needed are in $\Q$ and thus we don't necessarily need Assem's result.}

%\section{Generalizations}\label{sec:extra}

Let us end with two remarks on our restriction on $p$.

\begin{remark}\label{rmk:p} The main theorems in \cite{De02b} implies that for any scalar $c\in 1+\fm_F$ and $\cO=\cO_{x,r,\phi}\in\nil$, we have $c\cdot \cO=\cO_{x,r,\bar{c}\phi}=\cO_{x,r,\phi}=\cO$ where $\bar{c}=1$ is the image of $c$ in $k$. For $G=\SL_p(\Q_p)$ we could have $c\cdot \dot\cO\not=\cO$, and consequently the main results in \cite{De02b} cannot hold with the same statements. It can also be verified that the data we consider in Theorem \ref{thm:main} is not enough to determine the local character expansion for $G=\SL_p(\Q_p)$ already when $C=\C$.

Conversely, we are not aware of any example - neither could we prove that there is no such example - in which $c\in 1+\fm_F\implies c\cdot \cO=\cO$, but either \cite[Theorem. 5.6.1]{De02b} does not hold or that our Theorem \ref{thm:main} does not hold.
\end{remark}

\begin{remark}\label{rmk:poschar} When $\mathrm{char}(F)>0$, the method in \cite{Tsa25a} for local character expansion has to assume \cite[Hypotheses 3.2.1]{De02a} about mock exponential map, while in this paper we have proved Theorem \ref{thm:main} only assuming $p$ to be very large. Consequently \cite[Corollary 10]{Tsa25a} holds with our condition in this paper. In general we expect \cite[Hypotheses 3.2.1]{De02a} to always hold under our assumption on $p$. When $\mathrm{char}(F)=0$ \cite[Corollary 5 and 10]{Tsa25a} needs no assumption on $p$.
\end{remark}

\p
\bibliographystyle{amsalpha}
\def\cfgrv#1{\ifmmode\setbox7\hbox{$\accent"5E#1$}\else \setbox7\hbox{\accent"5E#1}\penalty 10000\relax\fi\raise 1\ht7 \hbox{\lower1.05ex\hbox to 1\wd7{\hss\accent"12\hss}}\penalty 10000 \hskip-1\wd7\penalty 10000\box7}
\providecommand{\bysame}{\leavevmode\hbox to3em{\hrulefill}\thinspace}
\providecommand{\MR}{\relax\ifhmode\unskip\space\fi MR }
% \MRhref is called by the amsart/book/proc definition of \MR.
\providecommand{\MRhref}[2]{%
  \href{http://www.ams.org/mathscinet-getitem?mr=#1}{#2}
}
\providecommand{\href}[2]{#2}

\end{document}